\newtheorem{theorem}{Theorem}
\newtheorem{lemma}[theorem]{Lemma}
\newcommand{\R}{\mathbb{ R}}
\newcommand{\mode}{\text{mode}}
\newcommand{\F}{\mathcal{F}}
\renewcommand{\P}{\mathcal{P}}
\begin{document}
\title{\bf Is the mode elicitable relative to unimodal distributions?}
 \author{Claudio Heinrich-Mertsching\thanks{Norwegian Computing Center Oslo, P.O. Box 114 Blindern, NO-0314 Oslo, Norway,
e-mail: \href{mailto:claudio@nr.no}{claudio@nr.no}} \and 
Tobias Fissler\thanks{Vienna University of Economics and Business (WU), Department of Finance, Accounting and Statistics, Welthandelsplatz 1, 1020 Vienna, Austria, 
	e-mail: \href{mailto:tobias.fissler@wu.ac.at}{tobias.fissler@wu.ac.at}}}
\maketitle
 \abstract{
 Statistical functionals are called elicitable if there exists a loss or scoring function under which the functional is the optimal point forecast in expectation. 
 While the mean and quantiles are elicitable, it has been shown in \cite{Heinrich2014} that the mode cannot be elicited if the true distribution can follow any Lebesgue density. We strengthen this result substantially, showing that the mode cannot be elicited if the true distribution can be any distribution with continuous Lebesgue density and unique local maximum.
 Likewise, the mode fails to be identifiable relative to this class.\\[-0.8em]
 }

\noindent
\textit{Keywords:}
Consistency; Elicitability; Identifiability; $M$-estimation; Mode; Scoring function.
\\[-0.8em]

\noindent
\textit{MSC2020 classes:}
62C99; 62F07; 62F10

\section{Introduction}
The mode of a probability density consists of the global maxima of this density. For a more general definition of the mode for probability distributions not admitting a Lebesgue or counting density, we refer to \cite{DearbornFrongillo2020}.
Together with the mean and the median, the mode is one the three main measures of central tendency in statistics \citep{DimitriadisPattonSchmidt2019}, which is reflected in many introductory textbooks to statistics as well as the \citeauthor{BoE}'s (\citeyear{BoE}) quarterly Inflation Report, which features all three point forecasts. 
The accuracy of point forecasts for the mean, median, mode, or any other functional $T$ on some class of probability densities $\F$ on $\R$ 
can be measured by loss or scoring functions. These are measurable functions $s\colon \R\times\R\to\R$ such that a forecast $x$ is penalized by the score $s(x,y)$ if $y$ materializes.
In order to encourage truthful forecasting, it has been advocated in the literature to use strictly consistent scoring functions for the target functional $T$ at hand \citep{Gneiting2011}.
For some subclass $\F'\subseteq \F$, the score $s$ is called strictly $\F'$-consistent for $T\colon\F\to\P(\R)$, where $\P(\R)$ is the power set of $\R$, if $\int |s(x,y)f(y)|\,\mathrm{d} y<\infty$
for all $x\in \R$ and $f\in\F'$ 
and if
\begin{equation}
\label{eq:cons}
\int s(t,y)f(y)\,\mathrm{d}y \le \int s(x,y)f(y)\,\mathrm{d}y
\end{equation}
for all $x\in\R$, $t\in T(f)$ and for all $f\in\F'$, and if equality in \eqref{eq:cons} implies that $x\in T(f)$.
If a functional admits a strictly $\F'$-consistent score, it is called \emph{elicitable} relative to $\F'$.
Besides facilitating meaningful forecast rankings which are exploited, e.g., in comparative backtests in finance \citep{NoldeZiegel2017}, strictly consistent scoring functions are the key tool in $M$-estimation and regression \citep{DimitriadisFisslerZiegel2020}, such as quantile or expectile regression \citep{KoenkerBasset1978, Koenker2005, NeweyPowell1987}.

The mean and the median are elicitable. Their most prominent scoring functions are the squared loss, $s(x,y) = (x-y)^2$ strictly consistent on the class of distributions with a second moment, and the absolute loss $s(x,y) = |x-y|$, which is strictly consistent on the class of distributions with a finite mean. 
On the class of counting densities, $\F_{\textrm{count}}$, the mode also admits a strictly consistent score in form of the zero-one loss, $s(x,y) = \mathds 1\{x\neq y\}$. While the mean and the median admit rich classes of strictly consistent scoring functions, the zero-one loss is essentially the only strictly $\F_{\textrm{count}}$-consistent score for the mode \citep{Gneiting2017}.
Relative to the class of all Lebesgue densities, however, \cite{Heinrich2014} showed that the mode fails to be elicitable.

Also other important functionals such as the variance or the expected shortfall have been found not to be elicitable relative to reasonably rich classes of probability distributions.
The proof strategy for these negative results was often simple and straightforward, exploiting the fact that any elicitable functional necessarily has convex level sets \citep{Osband1985}. This means that if two distributions have the same functional value, any mixture of these distributions has the same functional value. For corresponding versions for set-valued functionals, see \cite{FFHR2021}.
\citeauthor{Heinrich2014}'s (\citeyear{Heinrich2014}) result was historically the first to show that a functional with convex level sets fails to be elicitable.
This constitutes an exception to the equivalence result of \cite{SteinwartPasinETAL2014} who showed that, under weak regularity conditions, convex level sets are even sufficient for elicitability. The mode does not satisfy their conditions because it fails to be continuous, which also plays a crucial role in our proof.

The proof given in \cite{Heinrich2014} relies heavily on the fact that one can find Lebesgue densities with a high modal peak with arbitrarily small probability mass, while all the rest of the mass is concentrated elsewhere. 
So the core concept is closely related to the following fact:
\begin{center}
\begin{minipage}{0.7\textwidth}
{\it No matter how many random draws from a probability distribution we inspect, we can never be certain to have seen an observation near its mode.}
\end{minipage}
\end{center}

Essentially, it is always possible that the modal peak has a  mass  too small to be detected by the considered sample.
While this is certainly true from a probabilistic point of view, the relevance of this negative result for applied sciences is questionable. 
In a vast majority of applications, it is justified to assume that the data is distributed according to a Lebesgue density that has a unique local maximum, or is at least continuous. Neither of these cases is covered by the original proof given in \cite{Heinrich2014}.
In the meantime, the literature has made some progress regarding the elicitability question of the mode by showing that it is 
\emph{asymptotically} elicitable \cite{DimitriadisPattonSchmidt2019} under some restrictions, but generally even fails to be \emph{indirectly} elicitable \cite{DearbornFrongillo2020}.
The question as to whether the mode is elicitable relative to the relevant class of distributions with a continuous and unimodal Lebesgue density has been explicitly stated as an open problem (\textit{ibidem}).
In the present paper we answer this open question: Theorem \ref{maintheorem} shows that the mode remains not elicitable when the class of distributions is restricted to continuous densities with unique local maximum. 

It is impossible to generalize the proof of \cite{Heinrich2014} towards unimodal distributions, since for such distributions necessarily most of the mass is concentrated around the mode. The proof we present here follows an entirely different strategy and contains an alternative proof of \cite[Theorem 1]{Heinrich2014} as a special case.
Remarkably, our proof strategy can be applied to show that the mode also fails to be \emph{identifiable} relative to the class of continuous and unimodal densities; see Theorem \ref{identifiability-result}, and Section \ref{sec:Identifiability results} for precise definitions.
In applications, identifiability is crucial for forecast validation such as in calibration tests \citep{NoldeZiegel2017} as well as in $Z$-estimation or the (generalized) method of moments \citep{Hansen1982, NeweyMcFadden1994}.

In \cite{Heinrich2014} the term `strictly unimodal distributions' was used for densities with a unique {\it global} maximum, and is used differently in the present paper. We reserve the term `unimodal' for the stronger requirement of having only one \textit{local} maximum, which is more in line with the literature. Since we further consider continuous densities, the local maximum of a probability density is well-defined and we do not need to address ambiguities caused by densities differing on null sets.
Moreover, since the mode is a singleton on this class, we shall identify this singleton with its unique element when working with \eqref{eq:cons}.

\section{Elicitability results}
Denote by $\mathcal F_0$ the class of all strictly unimodal distributions with continuous Lebesgue density, i.e. of continuous densities with a unique local maximum.
The main result of our paper is the following theorem.

\begin{theorem}\label{maintheorem}
The mode is not elicitable relative to $\F_0$.
\end{theorem}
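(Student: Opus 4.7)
My plan is to argue by contradiction. Assuming $s$ is strictly $\F_0$-consistent for the mode, I will extract a pointwise constraint on $s(x,\cdot)$ in the forecast argument $x$ and show that it collapses the expected score $S_f(x):=\int s(x,y) f(y)\,\d y$ to a function that is constant on a dense set of forecasts, contradicting the strict minimum at $\mode(f)$.

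The core device is a two-triangle mixture engineered so that the mode jumps across a single critical value of the mixing weight, while the expected score remains linear (hence continuous) in that weight. For $a,b>0$ let $f_1$ be the triangle peaked at $0$ on $[-a,1]$ and $f_2$ the triangle peaked at $1$ on $[0,1+b]$, with peak heights $h_1=2/(a+1)$ and $h_2=2/(1+b)$. A direct piecewise-linear calculation shows that $h_\alpha:=\alpha f_1+(1-\alpha)f_2$ belongs to $\F_0$ for every $\alpha\in(0,1)\setminus\{\alpha^*\}$, where $\alpha^*=h_2/(h_1+h_2)$, with $\mode(h_\alpha)=1$ for $\alpha<\alpha^*$ and $\mode(h_\alpha)=0$ for $\alpha>\alpha^*$. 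Since $\alpha\mapsto S_{h_\alpha}(x)$ is affine, taking left- and right-limits of the strict inequality $S_{h_\alpha}(\mode(h_\alpha))<S_{h_\alpha}(x)$ at $\alpha^*$ forces the equality $S_{h_{\alpha^*}}(0)=S_{h_{\alpha^*}}(1)$. After rearrangement, using $\alpha^*(h_1+h_2)=h_2$, this identity becomes
\[
\int \bigl[s(0,y)-s(1,y)\bigr]\bigl[h_2 f_1(y)+h_1 f_2(y)\bigr]\,\d y \;=\; 0 \quad\text{for every } a,b>0.
\]

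A simple evaluation shows that the weight $h_2 f_1+h_1 f_2$ equals the constant $h_1 h_2$ on $[0,1]$ and is a triangular ramp on $[-a,0]$ and $[1,1+b]$. Sending $a,b\to 0$ yields $\int_0^1[s(0,y)-s(1,y)]\,\d y=0$; fixing $b\to 0$ and varying $a$ reduces the identity to $\int_{-a}^0[s(0,y)-s(1,y)](1+y/a)\,\d y=0$ for all $a>0$, from which a standard absolute-continuity argument produces $s(0,y)=s(1,y)$ for almost every $y<0$, and symmetrically for almost every $y>1$. Translating the entire construction to modes at arbitrary $x_0<x_1$ in place of $0,1$ gives, for every such pair, both $s(x_0,y)=s(x_1,y)$ for almost every $y\notin[x_0,x_1]$ and $\int_{x_0}^{x_1}[s(x_0,y)-s(x_1,y)]\,\d y=0$.

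Applying these two identities to all rational pairs and taking the countable union of the exceptional null sets, I obtain functions $L,R$ such that outside a single $y$-null set one has $s(x,y)=L(y)$ for every rational $x<y$ and $s(x,y)=R(y)$ for every rational $x>y$. The integral identities then read $\int_{x_0}^{x_1}[L(y)-R(y)]\,\d y=0$ on every rational interval $(x_0,x_1)$, forcing $L=R$ almost everywhere; call the common value $c(y)$. Consequently, for every rational forecast $x$, $s(x,\cdot)$ agrees almost everywhere with $c$, so $S_f(x)=\int c(y) f(y)\,\d y$ is independent of $x\in\Q$ for every $f\in\F_0$. Choosing any $f\in\F_0$ with a rational mode $m$ -- e.g.\ a centred Gaussian -- then contradicts $S_f(m)<S_f(x)$ for any rational $x\neq m$. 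The main technical obstacle is the careful bookkeeping of null sets when lifting the constraints from a countable family of rational pairs to the almost-everywhere structural statement about $s(x,\cdot)$, together with verifying that the triangular mixtures genuinely lie in $\F_0$ (continuous with a unique local maximum) for all $\alpha\neq\alpha^*$ and do not accidentally develop a second local maximum in some regime of $(a,b,\alpha)$.
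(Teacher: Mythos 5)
Your proposal is correct and reaches the same conclusion by a genuinely different route. Where the paper works with sequences $f_n, g_n \in \F_0$ converging pointwise to a scaled indicator $\mathds 1\{[x_0,x_3]\}$ while keeping the mode pinned at $x_1$ or $x_2$, you instead engineer a one-parameter affine family $h_\alpha = \alpha f_1 + (1-\alpha)f_2$ of triangle mixtures in which the mode jumps from $1$ to $0$ as $\alpha$ crosses $\alpha^*$; the continuity of $\alpha\mapsto S_{h_\alpha}(x)$ then forces the single equality $S_{h_{\alpha^*}}(0)=S_{h_{\alpha^*}}(1)$ at the critical (non-unimodal) density. Both constructions exploit the discontinuity of the mode near a flat density, but yours does so without any explicit approximation to uniformity and without the Fubini-for-null-sets machinery: you replace the paper's Lemma \ref{secondlemma} and the Fubini argument by a rational-pair covering and one application of Lebesgue differentiation. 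You also collapse the paper's Lemma \ref{firstlemma} (tail equality $s(x_0,y)=s(x_1,y)$ for $y\notin[x_0,x_1]$) and Lemma \ref{thirdlemma} (the interior average identity) into the single weighted integral identity with weight $h_2 f_1 + h_1 f_2$, which is constant on $[0,1]$ and has triangular ramps on the tails, and then peels these two conclusions off by sending $a,b\to 0$ and differentiating in $a$. The trade-offs: your argument is arguably more self-contained and constructive (explicit triangles, an explicit phase boundary, a scalar differentiation argument), while the paper's route makes the piecewise-constant structure of $s(\cdot,y)$ explicit (helpful for the identifiability result in Section \ref{sec:Identifiability results}) and, because its approximating sequences can be taken $C^\infty$, delivers the slightly stronger statement that non-elicitability persists even on the subclass of smooth unimodal densities; your triangle mixtures are only $C^0$, though replacing the triangles by smooth bumps would likely recover the same strengthening at the cost of re-verifying that the mixture remains unimodal for $\alpha\neq\alpha^*$.
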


Theorem \ref{maintheorem} and the definition of strict consistency implies
 that the mode is also not elicitable relative to any superclass $\F\supseteq \F_0$ of distributions. In particular, the mode is not elicitable relative to the class of all continuous Lebesgue densities, neither is it elicitable relative to the class of all unimodal distributions.

Throughout the proof, we assume the existence of a strictly $\F_0$-consistent scoring function $s$ for the mode. 
The integrability condition that $\int|s(x,y)|f(y)\,\mathrm{d} y<\infty$ for all $x\in\R$ and for all $f\in\F_0$ implies that $s$ is \emph{locally $y$-integrable} in the sense that 
\[
\int_a^b |s(x,y)| \,\mathrm{d} y < \infty
\]
for all $a,b,x\in \R.$

The proof takes several steps to finally show that $s$ is necessarily constant along $y$-sections, which constitutes a contradiction to the strict $\F_0$-consistency. The key element of the proof is that we can pointwise approximate 
the density of the uniform distribution on $[a,b]$ (which is not in $\F_0$) by two sequences $f_n,g_n\in \F_0$ such that, for all $n\in\mathbb N$, $\mode(f_n) = x_1\neq x_2 = \mode(g_n)$,
where  $x_1,x_2\in [a,b]$. Intuitively speaking, we exploit the fact that the mode is not continuous with respect to pointwise convergence of densities.
This implies that 
\[\int_a^b s(x_1,y) \,\mathrm{d} y  = \int_a^b s(x_2,y) \,\mathrm{d} y,\]
see proof of Lemma \ref{firstlemma} for details.
This provides a powerful tool for deriving statements about $s$: By the Radon--Nikodym Theorem for signed measures, two measurable functions are equal (up to a null set) if their integrals over any interval $[a,b]$ are identical.
The described approximation of indicator function is the only requirement on the distribution class $\F_0$ used throughout the proof. Therefore, our proof shows that the mode is not elicitable relative to any class of distributions that allows approximations of indicator functions in this sense. In particular, our proof shows that the mode is not elicitable relative to the class of unimodal distributions with {\it smooth} densities, which is a slightly stronger statement than formulated in Theorem \ref{maintheorem} above.

We first show the following result.

\begin{lemma}\label{firstlemma}
Let $x_1\leq x_2\in\R$. Then
\(
s(x_1,y) = s(x_2,y)
\)
for Lebesgue almost all $y\not \in (x_1,x_2)$.
\end{lemma}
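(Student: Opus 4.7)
The plan is to realize the heuristic sketched after Theorem~\ref{maintheorem}: approximate the uniform density on an interval $[a,b]$ with $a<x_1\le x_2<b$ by two sequences $f_n,g_n\in\F_0$ with $\mode(f_n)=x_1$ and $\mode(g_n)=x_2$, and then apply strict $\F_0$-consistency of $s$ from both sides. A convenient construction for $f_n$ is the mixture $f_n=\alpha_n\phi_1+(1-\alpha_n)\psi_n$, where $\phi_1\in\F_0$ is a fixed reference density strictly unimodal with mode $x_1$ (e.g.\ a Gaussian centered at $x_1$), and $\psi_n$ is a continuous piecewise-linear density equal to $\tfrac{1}{b-a}$ on $[a,b]$, decaying linearly to zero on $[a-\tfrac1n,a]\cup[b,b+\tfrac1n]$, and then renormalized to unit mass. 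Since $\phi_1$ is strictly monotone on each side of $x_1$ and $\psi_n$ is weakly so, the mixture $f_n$ is strictly monotone on each side of $x_1$, and hence $f_n\in\F_0$ with $\mode(f_n)=x_1$. For $\alpha_n\to 0$ one obtains $f_n(y)\to\tfrac{1}{b-a}\mathbf 1_{[a,b]}(y)$ Lebesgue-a.e., together with the $n$-uniform bound $f_n(y)\le\phi_1(y)+\tfrac{2}{b-a}\mathbf 1_{[a-1,b+1]}(y)$; the density $g_n$ is built analogously with peak $x_2$.

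Applying \eqref{eq:cons} to $f_n$ with alternative forecast $x_2$, and symmetrically to $g_n$ with alternative $x_1$, gives
\[
\int s(x_1,y)f_n(y)\,\mathrm dy\le\int s(x_2,y)f_n(y)\,\mathrm dy,\qquad\int s(x_2,y)g_n(y)\,\mathrm dy\le\int s(x_1,y)g_n(y)\,\mathrm dy.
\]
Multiplying the above uniform bound by $|s(x_i,\cdot)|$ yields an $L^1$-dominant: the $\phi_1$-piece is integrable because $\phi_1\in\F_0$, and the second summand is integrable by the local $y$-integrability of $s$. Dominated convergence then sends all four integrals to their limits, and the two resulting inequalities point in opposite directions and therefore collapse to
\[
\int_a^b s(x_1,y)\,\mathrm dy=\int_a^b s(x_2,y)\,\mathrm dy,\qquad\text{for all }a<x_1\le x_2<b.
\]

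Finally, I would extract pointwise equality by varying the endpoints. Fixing $b>x_2$ and subtracting the identities for two left endpoints $a_1<a_2\le x_1$ yields $\int_{a_1}^{a_2}s(x_1,y)\,\mathrm dy=\int_{a_1}^{a_2}s(x_2,y)\,\mathrm dy$ for every subinterval of $(-\infty,x_1]$, and the symmetric manipulation (fixing $a<x_1$ and varying the right endpoint) handles subintervals of $[x_2,\infty)$. The Radon--Nikodym theorem (equivalently, the Lebesgue differentiation theorem) then delivers $s(x_1,\cdot)=s(x_2,\cdot)$ Lebesgue-almost everywhere on $\R\setminus(x_1,x_2)$, which is the claim. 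The main obstacle I anticipate is the construction in the first step: densities in $\F_0$ must carry a strict local maximum at a single point, which fights against approximating the flat uniform shape, so the reference density $\phi_1$ (providing strict monotonicity on each side of the mode) and the box-shaped piece $\psi_n$ (providing the shape approximation) must be combined carefully to preserve strict unimodality while still allowing dominated convergence against the a priori unbounded $y$-sections of $s$.
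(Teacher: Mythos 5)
Your proposal is correct and follows essentially the same route as the paper: approximate the uniform density on an interval containing $[x_1,x_2]$ by $\F_0$-sequences peaked at $x_1$ and at $x_2$, invoke strict consistency from both sides, pass to the limit by dominated convergence, and conclude $s(x_1,\cdot)=s(x_2,\cdot)$ a.e.\ off $(x_1,x_2)$ via Radon--Nikodym/Lebesgue differentiation after varying an endpoint. You go a bit further than the paper in giving an explicit mixture construction for the approximating densities and a concrete dominating function, and you handle the endpoint-subtraction step with strictly interior endpoints rather than the paper's choice $x_0=x_1$, but these are only differences in detail, not in method.
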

\begin{proof}
For $x_1 = x_2$ the statement is obviously true, and we may assume $x_1 < x_2$ throughout the proof.
Consider arbitrary but fixed $x_0 \leq x_1$ and $x_3\geq x_2$. We can find a uniformly bounded sequence $f_n\in\mathcal F_0$, all with mode in $x_1$, converging pointwise to the function $(x_3-x_0)^{-1} \mathds 1\{[x_0,x_3]\}$. Here and throughout the paper we denote by $\mathds 1$ the indicator function.
By the strict $\F_0$-consistency of $s$ we necessarily have
$\int s(x_1,y)f_n(y)\,\mathrm{d} y < \int s(x_2,y)f_n(y)\,\mathrm{d} y$ for all $n$, and therefore
\begin{align*}
(x_3-x_0)^{-1}\int_{x_0}^{x_3} s(x_1,y) \,\mathrm{d} y 
&= \lim_{n\to\infty}\int s(x_1,y)f_n(y)\,\mathrm{d} y\\
&\leq \lim_{n\to\infty}\int s(x_2,y)f_n(y)\,\mathrm{d} y
 = (x_3-x_0)^{-1}\int_{x_0}^{x_3} s(x_2,y) \,\mathrm{d} y.
\end{align*}
The convergence follows from the dominated convergence theorem since $f_n$ is uniformly bounded and $s$ is locally $y$-integrable. Here, we assumed without loss of generality that there is a bounded interval containing the support of all $f_n$, which allows us to find a dominating function.

However, if we replace $f_n$ by a sequence $g_n\in \F_0$ of densities, all with mode in $x_2$, that converges to the same function, we obtain the reversed inequality, and conclude that
\begin{align*}
\int_{x_0}^{x_3} s(x_1,y) \,\mathrm{d} y = \int_{x_0}^{x_3} s(x_2,y) \,\mathrm{d} y.
\end{align*}
Now, plugging in $x_0 = x_1$, we obtain
\begin{align*}
\int_{x_1}^{x_3} s(x_1,y) \,\mathrm{d} y = \int_{x_1}^{x_3} s(x_2,y) \,\mathrm{d} y,\quad \text{for all $x_3 \geq x_2$.}\end{align*}
In particular, we can subtract the equality for $x_3 = x_2$, and obtain
\begin{align*}
\int_{x_2}^{x_3} s(x_1,y) \,\mathrm{d} y = \int_{x_2}^{x_3} s(x_2,y) \,\mathrm{d} y,\quad \text{for all $x_3 \geq x_2$.}
\end{align*}
This implies the statement of the lemma for Lebesgue almost all $y>x_2$, by the Radon--Nikodym theorem for signed measures. The statement for $y < x_1$ follows by an analog argument.
\end{proof}

In preparation of the next result we recall Fubini's theorem for null sets, see \citet{vanDouwen1989}.
Recall that for $A\subseteq \R^2$ the $x$-section for $x\in\R$ is defined as 
\[A^x := \{y \in \R\,:\, \text{there is an } x\in\R \text{ such that }(x,y)\in A\}.\]
\begin{theorem}[Fubini's theorem for null sets]
\label{Fubini}
For any $A\subseteq \R^2$ the following statements are equivalent:
\begin{enumerate}
\item $A$ is a null set with respect to the Lebesgue measure on $\R^2$.
\item $A^x$ is a null set with respect to the Lebesgue measure on $\R$ for Lebesgue almost all $x\in\R$.
\end{enumerate}
\end{theorem}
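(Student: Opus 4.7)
My plan is to prove the two implications separately, with the main subtlety being that $A\subseteq\R^2$ is not assumed to be Lebesgue-measurable, so classical Fubini--Tonelli cannot be applied to $A$ directly.

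For $(1)\Rightarrow(2)$, my plan is to squeeze $A$ inside a $G_\delta$ null set and then invoke the classical measurable Fubini--Tonelli theorem. By outer regularity of the two-dimensional Lebesgue measure $\lambda_2$, I would choose, for each $k\in\mathbb{N}$, an open $U_k\supseteq A$ with $\lambda_2(U_k)<2^{-k}$, and set $G=\bigcap_k U_k$; this is a Borel (in fact $G_\delta$) null set containing $A$. Applying Fubini--Tonelli to $\mathds 1_G$ gives $\int\lambda(G^x)\,\mathrm d x=\lambda_2(G)=0$, so $\lambda(G^x)=0$ for almost every $x$. The inclusion $A^x\subseteq G^x$ then delivers $\lambda^*(A^x)=0$ for almost every $x$, which is statement (2).

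For $(2)\Rightarrow(1)$, my plan is, for every $\varepsilon>0$, to construct a Borel cover $B_\varepsilon\supseteq A$ with $\lambda_2(B_\varepsilon)<\varepsilon$; letting $\varepsilon\to 0$ will then yield $\lambda_2^*(A)=0$. I would first absorb the exceptional null set $N=\{x:\lambda^*(A^x)>0\}$ into $N\times\R$, which is Borel null, so that I may assume every section $A^x$ is 1D null. For each $x$, 1D outer regularity furnishes an open $V_x\supseteq A^x$ with $\lambda(V_x)<\varepsilon$.

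The hard part will be to build these section-wise covers in a jointly measurable way across $x$, since $A$ itself is not measurable and no canonical measurable selection $x\mapsto V_x$ is available. My strategy is to enumerate the countable catalogue of all finite unions of rational-endpoint open intervals of total length $<\varepsilon$, partition the $x$-axis into pieces according to which enumerated family covers $A^x$, pass to Borel hulls of those pieces, and then reassemble a Borel 2D superset whose measure is controlled by Tonelli applied piece by piece. This covering-cum-enumeration argument, which is the technical heart of \cite{vanDouwen1989}, is where I would expect the real labour, and it is also why this result merits a dedicated reference rather than being a one-line corollary of classical Fubini--Tonelli.
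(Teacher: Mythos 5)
The paper itself gives no proof of Theorem~\ref{Fubini}; it only cites \cite{vanDouwen1989}, so there is no internal argument to compare against, and your attempt has to be judged on its own.

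Your argument for $(1)\Rightarrow(2)$ is correct and standard: any outer-null subset of $\R^2$ sits inside a $G_\delta$ null set $G$, classical Fubini--Tonelli applied to $\mathds 1\{G\}$ shows almost every section of $G$ is null, and monotonicity of outer measure passes this to $A$.

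The direction $(2)\Rightarrow(1)$, however, is \emph{not} a theorem of ZFC for arbitrary $A\subseteq\R^2$, so no amount of ``real labour'' will close your sketch. Under the continuum hypothesis, well-order $[0,1]$ with order type $\omega_1$ by $\prec$ and put $A=\{(x,y)\in[0,1]^2:\, y\prec x\}$. Every vertical section $A^x=\{y: y\prec x\}$ is countable, hence null, so (2) holds trivially; yet $A$ cannot be null, for if it were it would be Lebesgue measurable with two-dimensional measure $0$, and Fubini applied to the horizontal sections $\{x:\, y\prec x\}$ --- each co-countable, hence of full measure --- would force the measure to be $1$. The reference \cite{vanDouwen1989} is in fact devoted precisely to this phenomenon and its set-theoretic status; it is not a ZFC proof of the equivalence for arbitrary subsets of the plane. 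Two concrete defects in your covering construction illustrate where things break: first, a dense null section such as $\mathbb{Q}\cap[0,1]$ is not contained in any \emph{finite} union of rational intervals of total length $<\varepsilon$, so your countable catalogue fails to cover all of the $x$-axis; second, even with a richer catalogue, the resulting partition pieces of the $x$-axis are in general non-measurable, and replacing them by Borel hulls can produce overlapping sets of large total measure (countably many pairwise disjoint sets can each have full outer measure), so the Tonelli estimate at the end does not close.

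For the purposes of this paper the issue is harmless: the sets to which Theorem~\ref{Fubini} is actually applied, namely $A_a\cap C_a$ and $A_b\cap C_b$ in the proof of Lemma~\ref{secondlemma}, are Lebesgue measurable because the scoring function $s$ is measurable. For measurable $A$ both implications are immediate consequences of the classical Fubini--Tonelli identity $\int \lambda(A^x)\,\mathrm{d}x=\lambda_2(A)$ (with $\lambda$ and $\lambda_2$ the one- and two-dimensional Lebesgue measures), and neither your covering machinery nor the appeal to \cite{vanDouwen1989} is needed. The phrase ``for any $A\subseteq\R^2$'' in the statement should be read with a measurability hypothesis.
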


We now use Lemma \ref{firstlemma} to derive the following statement:
\begin{lemma}
\label{secondlemma}
For all $a<b$ it holds that
\[ 
s(x,y) = s(a,y) \mathds 1\{x<y\} + s(b,y) \mathds 1\{x>y\}
\] 
for Lebesgue almost all $(x,y)\in \R\times [a,b]$. 
\end{lemma}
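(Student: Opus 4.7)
The plan is to fix $a<b$, define the exceptional set
\[
N := \{(x,y)\in\R\times[a,b] : s(x,y)\neq s(a,y)\mathds 1\{x<y\} + s(b,y)\mathds 1\{x>y\}\},
\]
and show $N$ is a Lebesgue null set in $\R^2$. The natural route is to verify that every $x$-section $N^x := \{y\in[a,b] : (x,y)\in N\}$ is a Lebesgue null set in $\R$, and then invoke Fubini's theorem for null sets (Theorem \ref{Fubini}) to conclude jointly. Lemma \ref{firstlemma} supplies exactly the pointwise-in-$x$ statement needed, but only modulo an $x$-dependent null set; Fubini is what allows one to glue these into a single planar null set.

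To check that $N^x$ is null I would do a three-way case distinction. For $x\leq a$, Lemma \ref{firstlemma} with $x_1=x$, $x_2=a$ yields $s(x,y)=s(a,y)$ for almost all $y\notin(x,a)$, which in particular covers almost all $y\in[a,b]$; on this range the indicators evaluate to $\mathds 1\{x<y\}=1$ and $\mathds 1\{x>y\}=0$, so the identity holds. The case $x\geq b$ is symmetric, using $x_1=b$, $x_2=x$. For $a<x<b$, I apply Lemma \ref{firstlemma} twice: once with $(x_1,x_2)=(a,x)$ to obtain $s(x,y)=s(a,y)$ for almost all $y>x$, handling $y\in(x,b]$, and once with $(x_1,x_2)=(x,b)$ to obtain $s(x,y)=s(b,y)$ for almost all $y<x$, handling $y\in[a,x)$; the single point $y=x$ is negligible. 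Hence $N^x$ is null for every $x\in\R$.

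Finally, the direction $(2)\Rightarrow(1)$ of Theorem \ref{Fubini} then gives that $N$ is a Lebesgue null set in $\R\times[a,b]$, which is exactly the conclusion of Lemma \ref{secondlemma}. There is no real obstacle beyond careful bookkeeping: the only conceptual point is that the null set produced by Lemma \ref{firstlemma} depends on $x$, and it is precisely the Fubini-type theorem for arbitrary (not necessarily measurable) subsets that legitimizes the passage from sectionwise to joint almost-everywhere identities.
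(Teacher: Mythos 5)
Your proof is correct, and it is a cleaner organization of the same underlying ingredients (Lemma \ref{firstlemma} plus Theorem \ref{Fubini}). The paper's proof takes a more roundabout route: it introduces two auxiliary planar sets $A_a\cap C_a$ and $A_b\cap C_b$, shows each has null $x$-sections via Lemma \ref{firstlemma}, applies Fubini in the direction $(2)\Rightarrow(1)$ to conclude each set is null, then applies Fubini again in the direction $(1)\Rightarrow(2)$ to pass to $y$-sections, combines the two families of $y$-sections to show the target identity holds for a.e.\ $x$ once $y\in(a,b)$ is outside the null set $N_a\cup N_b$, and finally applies Fubini a third time. You avoid this $x$-section-to-$y$-section ping-pong by defining the full exceptional set $N$ at the outset and controlling its $x$-sections directly via a three-way case split on $x$ relative to $[a,b]$; a single application of $(2)\Rightarrow(1)$ then finishes the proof. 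Both use Lemma \ref{firstlemma} in exactly the same role (it delivers an $x$-dependent null set, and Fubini for arbitrary sets is what legitimizes passing from sectionwise to joint almost-everywhere statements). The paper's detour through $y$-sections does serve an expository purpose — it aligns with the geometric picture in Figure \ref{figure} — but your version is mathematically tighter, and you even prove the slightly stronger fact that $N^x$ is null for \emph{every} $x$, not merely almost every $x$.
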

Note that Lemma \ref{secondlemma} is silent about the values of $s$ on the diagonal $\{x=y\}$, since this set is a null set in $\R^2$.
Before presenting a formal proof of Lemma \ref{secondlemma}, it is helpful to consider a geometrical visualisation of the proof strategy, for which we refer to Figure \ref{figure}.
\begin{figure}[t]
\centering
\begin{minipage}{0.4\textwidth}
\includegraphics[width = \textwidth]{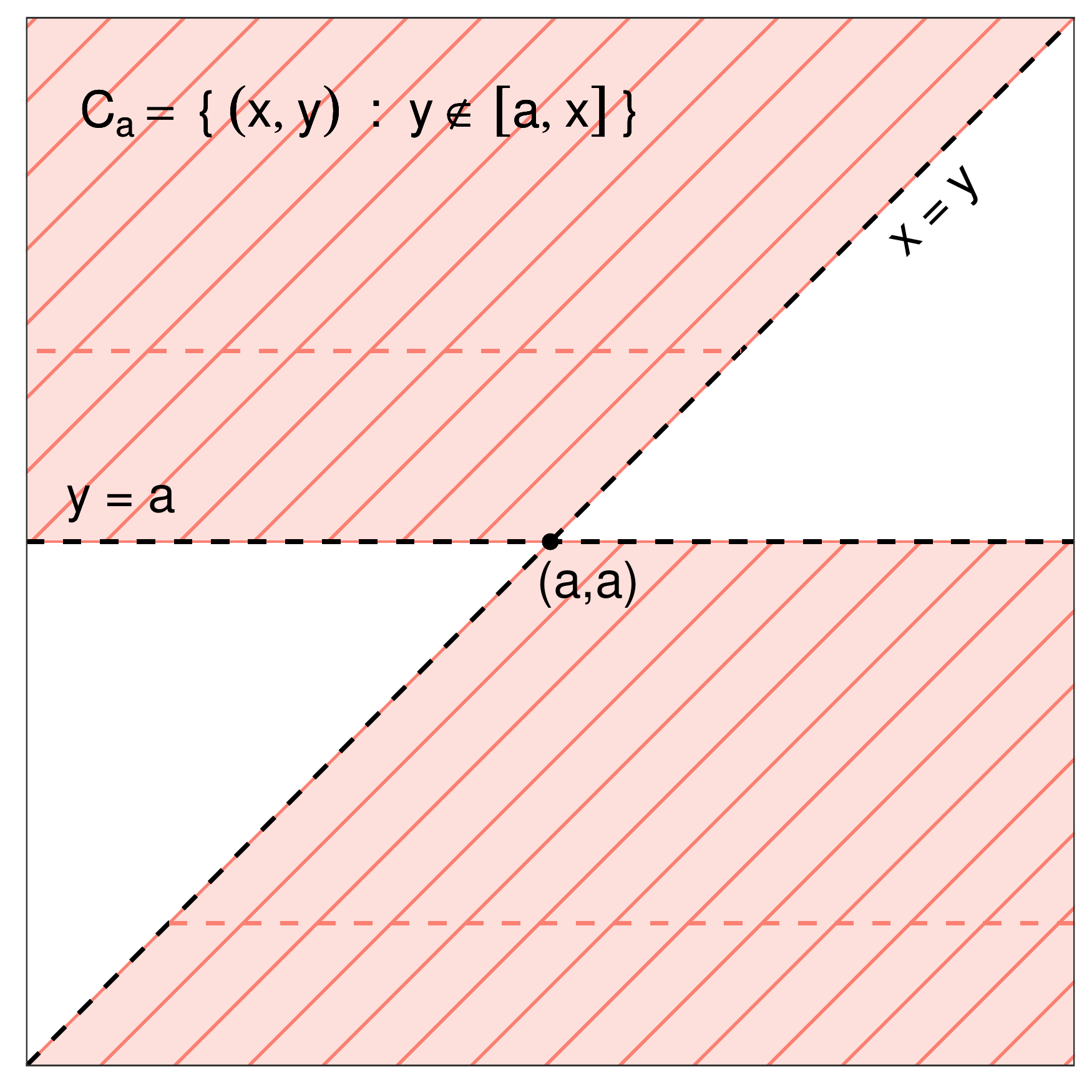}
\end{minipage}\hspace{2em}
\begin{minipage}{0.4\textwidth}
\includegraphics[width = \textwidth]{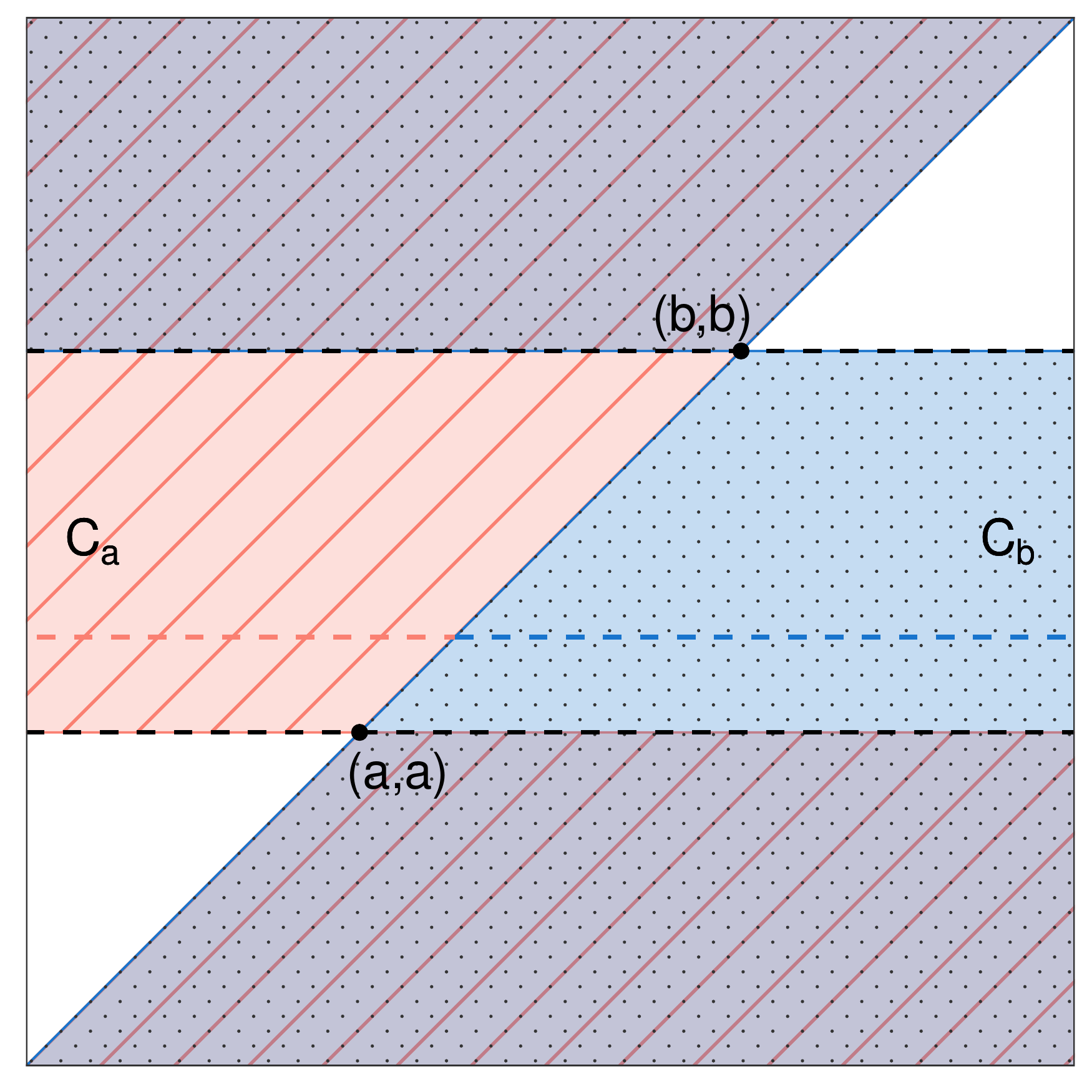}
\end{minipage}
\caption{ \small
Visualization of the proof of Lemma \ref{secondlemma}. The left panel shows the set $C_a$ hatched and in red. Lemma \ref{firstlemma}, applied with $x_1 = \min(a,x)$ and $x_2 = \max(a,x)$, implies that $s$ is constant along $y$-sections of $C_a$ (two examples of $y$-sections are shown by the dashed red lines). For Lemma \ref{secondlemma} we consider two such sets, $C_a$ and $C_b$ shown in the right panel, where $C_b$ is depicted dotted and in blue. 
Neglecting null sets, we obtain that for $y\in (a,b)$, the function $s$ takes at most two values on the $y$-section (example shown as dashed line), potentially changing values in $x=y$. 
\label{figure}}
\end{figure}
\begin{proof}[Proof of Lemma \ref{secondlemma}]
Let $a<b$.
For any $x\in \R$, consider the set $A^x_{a} := \{y: s(x,y) \neq s(a,y)\}$. By Lemma \ref{firstlemma}, applied with $x_1 = \min(a,x)$ and $x_2 = \max(a,x)$, the set $A^x_{a} \cap [a,x]^C$ is a null set (here and in general we follow the convention for intervals $[a,b] := [b,a]$, when $a>b$). 
This set is the $x$-section of the set $A_{a}\cap C_{a}$, where
\[A_{a} := \{(x,y) : s(x,y) \neq s(a,y)\},\qquad C_{a} := \{(x,y):y\in [a,x]^C\}.\]
See Figure \ref{figure} for a visualisation of the set $C_a$.
Consequently, since all $x$-sections of $A_{a} \cap C_{a}$ are null sets, $A_{a} \cap C_{a}$ is a null set itself, invoking Fubini's theorem for null sets.

Theorem \ref{Fubini} now implies that the set 
\[N_a := \{y\, :\, \text{the $y$-section of $A_{a} \cap C_{a}$ is not a null set}\}\]
is a null set in $\R$. Figure \ref{figure} shows examples of such $y$-sections. 
For $y_0 \in (a,b)$, the $y$-section of $A_{a}\cap C_{a}$ at $y_0$ is 
\begin{align}\label{zero-section}
A_{a}^{y_0}\cap C_{a}^{y_0} = 
\{x \,:\,x <y_0,\ s(x,y_0)\neq s(a,y_0)\}.
\end{align} 
Indeed, since $y_0>a$, we have $y_0\in [a,x]^C$ if and only if $x<y_0$. 

Repeating these arguments with $A_{a}\cap C_a$ replaced by $A_{b}\cap C_b$ shows that the $y$-sections of the latter are null sets as well, for all $y$ except on a null set $N_b$. For $y_1\in(a,b)$, we have that $y_1\in [b,x]^C$ if and only if $x>y_1.$
Therefore, the $y$-section of $A_{b}\cap C_{b}$ at $y_1$ is 
\begin{align}\label{one-section}
A_{b}^{y_1}\cap C_{b}^{y_1} = \{x\,:\,x > y_1,\ s(x,y_1) \neq s(b,y_1)\},
\end{align}
see the right panel in Figure \ref{figure} for an example.

Combining \eqref{zero-section} and \eqref{one-section} shows that, for all $y\in (a,b)\setminus (N_a\cup N_b)$, both the sets $A_a^y\cap C_a^y$ and $A_b^y\cap C_b^y$ are null sets. For such $y$ it holds that 
\[s(x,y) = s(a,y) \mathds 1\{x<y\} + s(b,y) \mathds 1\{x>y\},\]
for Lebesgue almost all $x\in \R$. Since this statement holds for Lebesgue almost all $y\in (a,b)$, the statement of Lemma \ref{secondlemma} follows from another application of Theorem \ref{Fubini}.
\end{proof}

Lemma \ref{secondlemma} essentially shows that the function $x\mapsto s(x,y)$ is piecewise constant for fixed $y$, with at most one jump at the diagonal $\{x = y\}$. In the following Lemma \ref{thirdlemma} we show that the function does not jump at the diagonal.

\begin{lemma}
\label{thirdlemma}
Let $a < b$. For Lebesgue almost all $y\in (a,b)$ it holds that $s(a,y) = s(b,y)$.
\end{lemma}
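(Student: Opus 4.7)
The plan is to combine an approximation argument in the spirit of Lemma~\ref{firstlemma} with the step-function structure provided by Lemma~\ref{secondlemma}. The key idea: if I compare the expected score for two distinct mode values $m<x$ both strictly inside $(a,b)$ under a near-uniform distribution on $[a,b]$, strict consistency forces an integral equality, and Lemma~\ref{secondlemma} rewrites this as an equality involving only $s(a,\cdot)$ and $s(b,\cdot)$.

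Concretely, I would fix $m,x\in(a,b)$ with $m<x$ and construct uniformly bounded sequences $f_n,g_n\in\F_0$ supported in a common compact set, with $\mode(f_n)=m$ and $\mode(g_n)=x$, each converging pointwise to $(b-a)^{-1}\mathds 1\{[a,b]\}$. Strict $\F_0$-consistency applied to $f_n$ (for which $m$ is optimal) and to $g_n$ (for which $x$ is optimal), combined with dominated convergence exactly as in Lemma~\ref{firstlemma}, yields two opposing inequalities whose combination gives
\[
\int_a^b s(m,y)\,\d y \;=\; \int_a^b s(x,y)\,\d y.
\]

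Next I would invoke Lemma~\ref{secondlemma}: by Theorem~\ref{Fubini}, for almost every $m\in(a,b)$ the identity $s(m,y)=s(a,y)\mathds 1\{m<y\}+s(b,y)\mathds 1\{m>y\}$ holds for almost every $y\in[a,b]$, and analogously for $x$. Substituting these step-function expressions into the integral equality above and cancelling the portions of the integrands that agree on $(a,m)$ and on $(x,b)$ reduces it to
\[
\int_m^x\bigl[s(a,y)-s(b,y)\bigr]\,\d y \;=\; 0
\]
for almost every pair with $a<m<x<b$. Since $s(a,\cdot)-s(b,\cdot)$ is locally integrable, its indefinite integral is absolutely continuous; vanishing on a set of endpoint pairs of full measure forces it to be constant on $(a,b)$, whence $s(a,y)=s(b,y)$ for Lebesgue almost every $y\in(a,b)$.

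The only technical point requiring care is the construction of $f_n,g_n$ — continuous unimodal densities with prescribed interior modes converging pointwise to the discontinuous uniform target — but this is a direct variant of the construction implicit in Lemma~\ref{firstlemma} (a vanishingly narrow peak at the mode grafted onto an almost-constant profile on $[a,b]$ with strictly monotone slopes and tapered smoothly to zero just outside), so no essentially new difficulty arises. The remainder is measure-theoretic bookkeeping routinely handled by Theorem~\ref{Fubini}.
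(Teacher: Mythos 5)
Your proof is correct and uses essentially the same ingredients in the same way as the paper: you exploit strict consistency against near-uniform sequences $f_n,g_n$ with prescribed modes and dominated convergence to obtain an integral equality, substitute the step-function identity of Lemma~\ref{secondlemma} to reduce it to $\int_{m}^{x}\bigl[s(a,y)-s(b,y)\bigr]\,\d y=0$ for almost all interior pairs $(m,x)$, and conclude by absolute continuity. The only difference from the paper is cosmetic reordering: you pass to the limit first (recovering the interior step of Lemma~\ref{firstlemma} with $x_0=a$, $x_3=b$) and then substitute Lemma~\ref{secondlemma}, whereas the paper substitutes inside the $f_n$-integral to localize the integrand to $[x_1,x_2]$ and takes the limit afterward, with $f_n$ accordingly concentrating on $[x_1,x_2]$ rather than $[a,b]$.
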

\begin{proof}
Lemma \ref{secondlemma} implies that the sets 
\[ 
B_x := \big\{y\in (a,b) : s(x,y) \neq s(a,y) \mathds 1\{x<y\} + s(b,y) \mathds 1\{x>y\}\big\}
\] 
are null sets for Lebesgue almost all $x\in \R$. Denote by $N$ the null set 
\[N = \{x\,:\, B_x \text{ has positive Lebesgue measure}\}.\]

Now, for $a < x_1 < x_2 < b$, with $x_1,x_2\not\in N,$ consider a sequence of 
uniformly bounded densities $f_n\in \F_0$ with support contained in $[a,b]$ and mode in $x_1$ that converges to $(x_2-x_1)^{-1}\mathds 1\{[x_1,x_2]\}$. 
By the strict $\F_0$-consistency of $s$, it holds for all $n$ that
\begin{align*}
0 &<\int_{a}^{b} \big[s(x_2,y) - s(x_1,y)\big]f_n(y) \,\mathrm{d} y\\
& =\int_{a}^{b} \big[s(a,y) \mathds 1\{x_2<y\} + s(b,y) \mathds 1\{x_2>y\} \\
&\phantom{=\int_{a}^{b} \big(} - s(a,y) \mathds 1\{x_1<y\} - s(b,y) \mathds 1\{x_1>y\}\big]f_n(y) \,\mathrm{d} y\\
& = \int_{x_1}^{x_2} \big[-s(a,y) + s(b,y)\big]f_n(y) \,\mathrm{d} y, 
\end{align*}
where the first equality holds since $x_1,x_2\not\in N$.
Letting $n\to\infty$, and applying the dominated convergence theorem, we obtain
\begin{align*}
0 \geq \int_{x_1}^{x_2} s(a,y) - s(b,y) \,\mathrm{d} y.
\end{align*}
However, by selecting a sequence $g_n\in\F_0$ similar to $f_n$ but with mode in $x_2$ rather than $x_1$, we obtain the reversed inequality
\begin{align*}
0 \leq \int_{x_1}^{x_2} s(a,y) - s(b,y) \,\mathrm{d} y.
\end{align*}
Consequently, we have $0 = \int_{x_1}^{x_2} s(a,y) - s(b,y) \,\mathrm{d} y$ for all $a<x_1<x_2<b$ with $x_1,x_2\not\in N$, and thus $s(a,y) = s(b,y)$, for Lebesgue almost all $y\in [a,b]$. 
\end{proof}

Combining Lemmas \ref{firstlemma} and \ref{thirdlemma} yields that, for any fixed $a\neq b$,  $s(a,y) = s(b,y)$ for Lebesgue almost all $y\in\R$. To conclude let $f\in \F_0$ be a density with mode in $a$. It holds that
\[
\int s(a,y)f(y)\,\mathrm{d} y = \int s(b,y)f(y)\,\mathrm{d} y.
\]
This contradicts the strict $\F_0$-consistency of $s$ and therefore completes the proof of Theorem \ref{maintheorem}. \qed

\section{Identifiability results}
\label{sec:Identifiability results}

A related question to the elicitability of a functional $T\colon\F\to\P(\R)$ is its \emph{identifiability}.
The functional $T$ is identifiable relative to $\F'\subseteq \F$ if there exists a strict $\F'$-identification function for it. 
That is a measurable function $v\colon\R\times\R\to\R$ such that $\int |v(x,y)f(y)|\,\mathrm{d}y<\infty$ for all $x\in\R$ and, for all $f\in\F'$, satisfying 
\[
\int v(x,y)f(y)\,\mathrm{d}y =0 \quad \Longleftrightarrow \quad x\in T(f)
\]
for all $x\in\R$, $f\in\F'$.
In Econometrics, identification functions are often called \emph{moment functions}.
Intuitively speaking, identification functions arise as derivatives of scoring functions, which can be made rigorous by \emph{Osband's principle} \citep{Osband1985, FisslerZiegel2016}.
\cite{SteinwartPasinETAL2014} not only show that convex level sets and elicitability are equivalent, subject to regularity conditions, but extend this result also to identifiability. 
Since the mode violates these regularity conditions, it is open whether the mode is identifiable relative to $\F_0$, the class of all strictly unimodal distributions with continuous Lebesgue density.
A slight modification of the proof of Lemma \ref{firstlemma} yields:

\begin{theorem}\label{identifiability-result}
The mode is not identifiable relative to $\F_0$.
\end{theorem}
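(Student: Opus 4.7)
The plan is to assume, toward a contradiction, that a strict $\F_0$-identification function $v$ for the mode exists, and then to prove that $v(x,\cdot)$ vanishes Lebesgue-almost everywhere for every $x\in\R$. This collapses the identification property: for any $f\in\F_0$ with $\mode(f)\neq x$ we would still have $\int v(x,y)f(y)\,\mathrm{d} y=0$, contradicting strictness.

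The core step reuses, in simplified form, the approximation device from the proof of Lemma \ref{firstlemma}. Fix $x\in\R$ and arbitrary $x_0\leq x\leq x_3$, and construct a uniformly bounded sequence $f_n\in\F_0$, with common support in a fixed bounded interval and $\mode(f_n)=x$, that converges pointwise almost everywhere to $(x_3-x_0)^{-1}\mathds{1}\{[x_0,x_3]\}$. Crucially, and in contrast to Lemma \ref{firstlemma}, only a single such sequence is required: the identification property directly delivers the equality $\int v(x,y)f_n(y)\,\mathrm{d} y = 0$ for every $n$, rather than an inequality whose reverse would need a second sequence with a different prescribed mode. This is the ``slight modification'' alluded to in the text.

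Passing to the limit via dominated convergence---justified by the local integrability of $v(x,\cdot)$, which follows from the integrability axiom of $v$ tested against any single element of $\F_0$ bounded below by a positive constant on the common support of the $f_n$---yields
\[
\int_{x_0}^{x_3} v(x,y)\,\mathrm{d} y = 0 \quad \text{for every } x_0\leq x\leq x_3.
\]
Splitting at $x$ gives $\int_a^x v(x,y)\,\mathrm{d} y = 0$ for all $a\leq x$ and $\int_x^b v(x,y)\,\mathrm{d} y = 0$ for all $b\geq x$, hence the equality on every bounded interval in $\R$. The Radon--Nikodym theorem for signed measures, applied exactly as at the end of the proof of Lemma \ref{firstlemma}, then forces $v(x,y)=0$ for Lebesgue-almost every $y\in\R$. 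To close the contradiction, pick any $f\in\F_0$ with $\mode(f)\neq x$ (a shifted unimodal bump suffices); the identification property requires $\int v(x,y)f(y)\,\mathrm{d} y\neq 0$, but what we have just proved makes this integral zero.

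The main, and only mild, obstacle I expect is the bookkeeping for dominated convergence: one must arrange the $f_n$ to share a common bounded support and then exhibit an $\F_0$-density positive on that support in order to invoke the integrability clause in the definition of an identification function. Once local integrability of $v(x,\cdot)$ is secured, the rest of the argument is a direct transcription of the first half of the proof of Lemma \ref{firstlemma}, with inequalities replaced by the identification equality.
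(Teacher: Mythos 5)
Your proposal is correct and follows essentially the same route as the paper: approximate a scaled indicator $(b-a)^{-1}\mathds{1}\{[a,b]\}$ by a single sequence of unimodal densities in $\F_0$ with mode pinned at $x$, pass to the limit in $\int v(x,y)f_n(y)\,\mathrm{d} y = 0$ by dominated convergence, specialize the endpoints to $x$, and invoke the Radon--Nikodym theorem to conclude $v(x,\cdot)=0$ almost everywhere, contradicting strictness. The only difference is presentational: the paper directly sets $a=x$ (resp.\ $b=x$) before integrating, while you first obtain the vanishing integral on all intervals containing $x$ and then split; you also make the local-integrability justification for dominated convergence explicit, which the paper leaves implicit.
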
 
Theorem \ref{identifiability-result} generalizes \citet[Lemma 1]{DearbornFrongillo2020}, which establishes that the mode is not identifiable relative to the class of distributions with a unique \textit{global} maximum.
Again, Theorem \ref{identifiability-result} implies that the mode fails to be identifiable relative to any superclass $\F\supseteq \F_0$.

\begin{proof}[Proof of Theorem \ref{identifiability-result}]
The proof is largely analog to Lemma \ref{firstlemma}. Assume the existence of a strict $\F_0$-identification function $v$. Consider $a\leq x\leq b\in\R$ with $a<b$, and consider a sequence of uniformly bounded densities $f_n\in \F_0$, all with mode in $x$, converging pointwise to the scaled indicator function $(b-a)^{-1}\mathds 1\{[a,b]\}$. Since $\int v(x,y) f_n(y)\,\mathrm{d}y = 0$ for all $n$, an application of the dominated convergence theorem yields
\[
(b-a)^{-1}\int_a^b v(x,y)\,\mathrm{d}y = \lim_{n\to\infty} \int v(x,y) f_n(y)\,\mathrm{d}y = 0.
\]
Selecting $a = x$ and considering that this equality holds for any $b>x$, the Radon--Nikodym theorem shows that $v(x,y)=0$ for Lebesgue almost all $y>x$. Similarly, selecting $b=x$ shows that $v(x,y)=0$ for Lebesgue almost all $y<x$, and thus we have $v(x,y) = 0$ for almost all $y$. This yields a contradiction by choosing $f\in \F_0$ with mode in $x'\neq x$, since it holds that $\int v(x,y)f(y)\,\mathrm{d}y =0.$
\end{proof}

\section*{Acknowledgements}
The authors would like to thank Timo Dimitriadis for constructive comments which improved the content of the paper.
Claudio Heinrich-Mertsching is grateful to the Norwegian Computing Center for its financial support.

 \bibliographystyle{chicago}

\end{document}